\documentclass[12pt,preprint]{amsart}

\usepackage{amssymb,amsfonts,amsthm,amsmath,amscd}

\theoremstyle{plain}
\newtheorem{theorem}{Theorem}

\newtheorem{proposition}{Proposition}

\newtheorem{corollary}{Corollary}
\newtheorem{conjecture}{Conjecture}

\theoremstyle{remark}

\def\G{\mathcal{G}}
\def\P{\mathcal{P}}
\def\N{\mathcal{N}}

\def\X{\mathcal{X}}
\def\Y{\mathcal{Y}}

\begin{document}

\title{The game Max-Welter}

\author{Nhan Bao Ho}

\address{Department of Mathematics and Statistics, La Trobe University, Melbourne, Australia 3086}
\email{nhan.ho@latrobe.edu.au, nhanbaoho@gmail.com}

\maketitle

\begin{abstract}
\begin{sloppypar}
On a semi-infinite strip of squares rightward numbered $0, 1, 2, \ldots$ with at most one coin in each square, in Welter's game, two players alternately move a coin to an empty square on its left. Jumping over other coins is legal. The player who first cannot move  loses. We examine a variant of Welter's game, that we call Max-Welter, in which players are allowed to move only the coin furthest to the right. We solve the winning strategy and describe the positions of Sprague-Grundy value 1. We propose two theorems classifying some special cases where calculating the Sprague-Grundy value of a position of size $k$ becomes easier by considering another position of size $k-1$. We establish two results on the periodicity of the Sprague-Grundy values. We then show that the game Max-Welter is classified in a proper subclass of tame games that Gurvich calls strongly miserable.
\end{sloppypar}

\end{abstract}

\section{Introduction}
Welter's game is played with a finite number of coins on a semi-infinite strip of squares labeled $0, 1, 2, 3, \ldots$ from its left end. Each square contains at most one coin. Two players move alternately, choosing one coin and moving it to one empty square on the left. Jumping over other coins is legal. For example, in Figure \ref{W.position} (where coins are represented by the bullet symbol $\bullet$), the coin in square 6 can be moved to one of the squares 0, 1, 3, and 4. The player first unable to move loses.

\begin{figure}[ht]
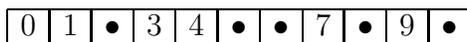

\begin{center}
\begin{tabular}{|c|c|c|c|c|c|c|c|c|c|c|}
\hline
0&1&$\bullet$&3 &4 &$\bullet$ &$\bullet$ &7 &$\bullet$ &9 &$\bullet$\\
\hline
\end{tabular}
\caption{A position in Welter's game}\label{W.position}
\end{center}
\end{figure}

As the game is finite, there is exactly one player who can win. A position is an \emph{$\N$-position} if the player, who is about to move in this position, has a strategy to win the game regardless of the opponent's strategy. Otherwise, the position is a \emph{$\P$-position}. Recall that the Sprague-Grundy value of a position $p$, denoted by $\G(p)$, is defined recursively as follows: the terminal (final) position has value 0; furthermore,  $\G(p) = n$ if and only if for every $m$ such that $0 \leq m < n$, there exists a move from $p$ to some $q$ such that $\G(q) = m$ and there is no move from $p$ to $q$ such that $\G(q) = n$.

Welter's game and its solution are generally discussed in \cite{Welter1} and \cite{Welter2}. A theory of Welter's game is analyzed in \cite[Chapter 13]{con}. A Welter's position with $k$ coins in the squares $a_1,a_2,\ldots,a_k$ in which $a_1 < a_2 < \ldots < a_k$ is denoted by $(a_1, a_2, \ldots, a_k)$. The Sprague-Grundy function for Welter's game is called the Welter function and is computed by the \emph{Mating Method} as follows. The Sprague-Grundy value of a position $(a_1,a_2,\ldots,a_k)$ is denoted by $[a_1|a_2|\ldots|a_k]$. The \emph{Nim addition}, denoted by $\oplus$, is addition in the binary number system without carrying. First mate any two $a_{i_1}, a_{i_2}$ such that $a_{i_1}, a_{i_2}$ are congruent to each other modulo the highest possible power of 2. Next mate another pair $(a_{i_3}, a_{i_4})$ from the remaining $k-2$ numbers in the same way and so on. The process ends when all numbers are mated, if $k$ is even, or all but one, $a_{i_k}$, when $k$ is odd. In the latter case $a_{i_k}$ is called a \emph{spinster}. Then,
\[ [a_1|a_2|\ldots|a_k] = [a_{i_1}|a_{i_2}] \oplus [a_{i_3}|a_{i_4}] \oplus \cdots \oplus s \quad \text{if $s$ is a spinster}\]
in which $[a|b] = (a \oplus b) - 1$. For example, the Sprague-Grundy value of the position $(2,5,6,8,10)$ in Figure \ref{W.position} is computed as follows. The mates are (2,10), (6,8) and 5 is a spinster. Therefore,
\begin{align*}
 [2|5|6|8|10] & = [2|10] \oplus [6|8] \oplus [5] \\
              & = ((2 \oplus 10)-1 ) \oplus ((6 \oplus 8)-1 ) \oplus 5 \\
              & = (8-1) \oplus (14-1) \oplus 5 = 15.
\end{align*}

Although the method of computing the Welter function, as well as a next winning move from a position, if this exists, are already known \cite[Chapter 13]{con}, there are still many interesting questions worthy of study. For example, we do not yet known the characterization of $\P$-positions with more than four coins. A different direction is to consider variants of Welter's game. A variant of Welter's game, \emph{k-Welter}, is studied in \cite{k-Welter2, k-Welter1} in which a coin can be moved to at most $k$ squares from its present square.

In this paper, we introduce a restriction of Welter's game, that we call Max-Welter, obtained as follows: from a position $(a_1,a_2,\ldots,a_k)$, one can move only the coin in the largest square $a_k$ to an empty square on its left. Jumping is legal. For example, from the position as in Figure \ref{W.position}, in Max-Welter, there are only six legal move: moving the coin in the square 10 to any of empty squares 0, 1, 3, 4, 7, and 9.

The paper is organized as follows. We describe the $\P$-positions (whose Sprague-Grundy value is $0$) in Section 2 and the positions of the Sprague-Grundy value $1$ in Section 3. Then we prove some properties of the positions of higher Sprague-Grundy values in Section 4. After this, in Section 5, we give two theorems on the Sprague-Grundy function so that calculating the Sprague-Grundy value of a position can be simplified by eliminating some small entries. We then establish two results together with three conjectures on the periodicity of Sprague-Grundy values in Section 6. In Section 7, we examine the mis\`{e}re version (in which the player who makes the last move loses) of the game Max-Welter and show that the game Max-Welter is not only tame (in the sense of \cite{con}) but also strongly miserable (in the sense of \cite{Gur1}).

\medskip
\section{The winning strategy}

This section details the winning strategy for the game Max-Welter.

\smallskip
\begin{theorem} \label{Max-P}
A position $(a_1,a_2,\ldots,a_k)$ with $k \geq 2$ is a $\P$-position if and only if $a_k=a_{k-1}+1$ and $a_{k-1}+k$ is even.
\end{theorem}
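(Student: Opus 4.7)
The plan is to prove that $P := \{(a_1,\ldots,a_k) \mid a_k = a_{k-1}+1 \text{ and } a_{k-1}+k \text{ is even}\}$ is precisely the set of $\P$-positions. By the standard characterization, it suffices to check three things: the terminal position belongs to $P$; no legal move from a position in $P$ lands in $P$; and every position outside $P$ has at least one move into $P$. The terminal position is $(0,1,\ldots,k-1)$, for which $a_k = k-1 = a_{k-1}+1$ and $a_{k-1}+k = 2k-2$, so it indeed lies in $P$.

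For the closure direction, assume $(a_1,\ldots,a_k) \in P$ and a move sends $a_k$ to an empty square $b < a_k$. Since $a_{k-1} = a_k - 1$ is occupied, one has $b < a_{k-1}$, so the new largest coin is $a_{k-1}$ and the new second-largest is $\max(a_{k-2}, b)$ (or simply $b$ when $k = 2$). I would then split into the subcases $b > a_{k-2}$ and $b < a_{k-2}$. In each subcase the hypothesis that $a_{k-1}+k$ is even forces the new position to fail either the gap condition (new largest differing from new second-largest by more than $1$) or the parity condition (new second-largest plus $k$ being odd).

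For the escape direction, suppose $(a_1,\ldots,a_k) \notin P$. Either (I) $a_k > a_{k-1}+1$, or (II) $a_k = a_{k-1}+1$ and $a_{k-1}+k$ is odd. In the subcase of (I) with $a_{k-1}+k$ even, move $a_k$ to the empty square $a_{k-1}+1$ to reach a position in $P$ directly. In the remaining subcases, the strategy is to produce a new position whose largest coin is $a_{k-1}$ and whose second-largest is $a_{k-1}-1$, making the new parity invariant $(a_{k-1}-1)+k$ even. This is realized by moving $a_k$ to $b = a_{k-1}-1$ when $a_{k-2} < a_{k-1}-1$ (so $a_{k-1}-1$ is empty), or to any empty square $b < a_{k-1}$ when $a_{k-2} = a_{k-1}-1$ (in which case $a_{k-2}$ itself serves as the new second-largest).

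The main obstacle I anticipate is confirming that an empty square $b < a_{k-1}$ actually exists in the subcase $a_{k-2} = a_{k-1}-1$. The count of empty squares below $a_{k-1}$ is $a_{k-1}-(k-2)$, which is positive unless $(a_1,\ldots,a_{k-1}) = (0,1,\ldots,k-2)$; but that exceptional configuration forces $a_{k-1}+k = 2k-2$ to be even, contradicting the parity demands of the remaining subcases and so ruling it out. The edge case $k = 2$, where $a_{k-2}$ is undefined, would be handled separately: the parity hypothesis forces $a_1 \geq 1$, and then moving $a_2$ to $a_1 - 1$ yields a position in $P$.
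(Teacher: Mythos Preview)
Your proposal is correct and follows essentially the same route as the paper: verify the terminal position lies in $P$, show no move out of $P$ lands back in $P$, and show every position outside $P$ admits a move into $P$, with the escape direction organized around whether $a_{k-1}+k$ is even (move to $a_{k-1}+1$) or odd (aim for second-largest $a_{k-1}-1$). Your treatment is in fact slightly more careful than the paper's---you explicitly justify the existence of an empty square below $a_{k-1}$ via the count $a_{k-1}-(k-2)$ and handle the $k=2$ edge case separately---whereas the paper's closure argument is a touch slicker, observing in one line that after any move the new top coin is $b_k=a_{k-1}$, so $b_k-1+k$ is odd and the parity condition must fail whenever the gap condition holds.
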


\begin{proof}
Let $\X$ be the set of positions $(a_1,a_2,\ldots,a_k)$ such that $k \geq 2$, $a_k=a_{k-1}+1$, and $a_{k-1}+k$ is even. We must prove two facts:
\begin{enumerate}
\item[1.] there is no move between any two positions in $\X$;
\item[2.] from every position $A$ not in $\X$, there is a move that terminates in $\X$,

\end{enumerate}

1. Let $(a_1,a_2,\dots,a_k)$ be a position in $\X$. Then $a_{k-1}+k$ is even. Moving the coin from the square $a_k$ results in $(a_1,a_2,\dots,a_k)$ moving to some position $(b_1,b_2,\dots,b_k)$, where $b_k = a_{k-1}$. Since $b_k-1+k = a_{k-1}-1+k$, $b_k-1+k$ is odd and so $(b_1,b_2,\dots,b_k) \notin \X$.

2. Let $(a_1,a_2,\dots,a_k)$ be a position not in $\X$. Denote this position by $A$. We first consider the case $a_k = a_{k-1}+1$.  Since $A \notin \X$, $a_{k-1}+k$ is odd. If $a_{k-2} = a_{k-1}-1$, as $A$ is not a terminal position, there exists an empty square $j$ such that $j < a_{k-2}$. Moving the coin from the square $a_k$ to the square $j$ results in $A$ moving to some position in $\X$. If $a_{k-2} < a_{k-1}-1$, moving the coin from the square $a_k$ to the square $a_{k-1}-1$ results in $A$ moving to some position in $\X$.

Consider the case $a_k > a_{k-1}+1$. Assume that moving the coin from the square $a_k$ to the square $a_{k-1}+1$ does not result in $A$ moving to some position in $\X$. Then $(a_1,a_2,\ldots,a_{k-1})$ is not a terminal position. Moreover, $a_{k-1}+k$ is odd. If $a_{k-2} = a_{k-1}-1$, as $(a_1,a_2,\ldots,a_{k-1})$ is not a terminal position, there exists an empty square $j$ such that $j < a_{k-2}$. Moving the coin from the square $a_k$ to the square $j$ results in $A$ moving to some position in $\X$. If $a_{k-2} < a_{k-1}-1$, moving the coin from the square $a_k$ to the square $a_{k-1}-1$ results in $A$ moving to some position in $\X$.
\end{proof}


\section{Positions of value 1}
Here we describe the positions of the Sprague-Grundy value $1$. They appear to be closely related to the $\P$-positions.

\smallskip
\begin{theorem}\label{Max-V1}
A position $(a_1,a_2,\ldots,a_k)$ with $k \geq 2$ has the Sprague-Grundy value $1$ if and only if one of the following conditions holds:
\begin{enumerate}
\item [$($a$)$] $(a_1,a_2,\ldots,a_k) = (0,1,\ldots,l,l+2,l+3,\ldots,k)$ for some $l$ such that $l \leq k-2$;
\item [$($b$)$] $a_k=a_{k-1}+1$ and $a_{k-1}+k$ is odd.
\end{enumerate}
\end{theorem}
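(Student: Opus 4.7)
My plan is to mirror the structure of the proof of Theorem~\ref{Max-P}. I call $A$ a \emph{$1$-position} if it satisfies (a) or (b), and I aim to establish by induction on the game tree that the Sprague--Grundy value equals $1$ precisely on $1$-positions. Given Theorem~\ref{Max-P}, it suffices to prove three facts: (i) every $1$-position admits a move to a $0$-position; (ii) no $1$-position admits a move to a $1$-position; (iii) every position that is neither a $0$-position nor a $1$-position admits a move to a $1$-position. Combined with Theorem~\ref{Max-P}, these facts yield the theorem by induction.

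The easy directions are (i) and (ii). From a type (a) position the only legal move sends $k$ to the unique empty square $l+1$, landing in $(0,1,\ldots,k-1)$, which is a $0$-position; this simultaneously establishes (i) for type (a) and the type (a) portion of (ii). For type (b), (i) follows immediately from the argument used in Theorem~\ref{Max-P}, since the condition $a_{k-1}+k$ odd is exactly what is needed to produce a move to a $0$-position. For (ii) from type (b), any move takes $a_k = a_{k-1}+1$ to some $j < a_{k-1}$; the new position has largest element $a_{k-1}$ and second largest either $a_{k-2}$ or $j$, and a short parity calculation shows that in both cases the type (b) test fails (the hypothesis $a_{k-1}+k$ odd forces the new second-largest plus $k$ to be even), while the type (a) test would demand $a_{k-1}=k$, contradicting $a_{k-1}+k$ odd.

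For (iii), let $A$ be neither a $0$-position nor a $1$-position. If $a_k = a_{k-1}+1$ then failing to be a $0$-position forces $a_{k-1}+k$ odd, making $A$ itself of type (b), a contradiction; hence $a_k > a_{k-1}+1$. If $a_{k-1}+k$ is odd, the move $a_k \mapsto a_{k-1}+1$ produces a type (b) $1$-position. If $a_{k-1}+k$ is even, I first attempt $a_k \mapsto a_{k-1}-1$: this is valid and yields a type (b) $1$-position provided $a_{k-1}-1$ is a nonnegative empty square, failing only when $a_{k-2} = a_{k-1}-1$ (for $k \geq 3$) or $a_{k-1} = 0$ (for $k=2$). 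In that boundary case, any move $a_k \mapsto j$ with $j < a_{k-2}$ still yields a type (b) $1$-position; the residual sub-case is when no such $j$ exists, forcing $(a_1,\ldots,a_{k-2}) = (0,1,\ldots,k-3)$, whence $A = (0,1,\ldots,k-2,a_k)$. Since $A$ is not of type (a), we must have $a_k > k$, and the move $a_k \mapsto k$ lands in the type (a) $1$-position $(0,1,\ldots,k-2,k)$.

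The delicate step is the final boundary sub-case of (iii): once the leftmost $k-2$ coins are already packed tight as $(0,1,\ldots,k-3)$, no internal empty square is available and the only escape is to produce a type (a) position. It is precisely this scenario that forces case (a) of the theorem to be included alongside case (b) for a complete classification of positions of Sprague--Grundy value $1$.
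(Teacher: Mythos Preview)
Your proof is correct and follows essentially the same approach as the paper: define $1$-positions, then verify the two key facts (no $1$-position moves to a $1$-position; every non-$0$, non-$1$ position moves to a $1$-position), with the same case split on the parity of $a_{k-1}+k$ and the same boundary handling when $(a_1,\ldots,a_{k-1})$ is the terminal block $(0,1,\ldots,k-2)$. The only difference is that you explicitly verify fact~(i) (each $1$-position reaches a $0$-position), which the paper leaves implicit since it follows at once from Theorem~\ref{Max-P} and the disjointness of $0$- and $1$-positions.
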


\begin{proof}
Let $\Y$ be the set of positions verifying condition (a) or (b) of Theorem \ref{Max-V1}. Note that $\X$ (defined in Theorem \ref{Max-P}) is the set of $\P$-positions. We must prove two facts:
\begin{enumerate}
\item[1.] there is no move between any two positions in $\Y$,
\item[2.] from every position $A$ not in $\X \cup \Y$, there is a move that terminates in $\Y$.
\end{enumerate}

1. Let $(a_1,a_2,\dots,a_k)$ be a position in $\Y$. If $(a_1,a_2,\dots,a_k)$ is of the form (a) then the claim holds. If $(a_1,a_2,\dots,a_k)$ is of the form (b), moving the coin from the square $a_k$ leads to some position $(b_1,b_2,\dots,b_k)$ with $b_k = a_{k-1}$. Since $b_k-1+k = a_{k-1}-1+k$, $b_k-1+k$ is even and so $(b_1,b_2,\dots,b_k) \notin \Y$.

2. Let $(a_1,a_2,\dots,a_k)$ be a position not in $\X \cup \Y$. Denote this position by $A$. Note that $a_k \geq a_{k-1}+2$. Assume that $(a_1,a_2,\dots,a_{k-1})$ is a terminal position. Then $a_k \geq a_{k-1}+3$ by the condition (a). Moving the coin from the square $a_k$ to the square $a_{k-1}+2$ leads to a position in $\Y$. Assume that $(a_1,a_2,\dots,a_{k-1})$ is not a terminal position. Then there exists an empty square $j$ such that $j < a_{k-1}$. Consider $a_{k-1}+k$. If $a_{k-1}+k$ is odd, then moving the coin from the square $a_k$ to the square $a_{k-1}+1$ leads to a position in $\Y$. If $a_{k-1}+k$ is even, then $a_{k-1}-1+k$ is odd. We consider the square $a_{k-1}-1$. If the square $a_{k-1}-1$ is empty, moving the coin from the square $a_k$ to the square $a_{k-1}-1$ leads to a position in $\Y$. Otherwise, moving the coin from the square $a_k$ to the empty square $j$ leads to a position in $\Y$.
\end{proof}

\smallskip
\begin{corollary} \label{SG-V}
Let $(a_1,a_2,\ldots,a_k)$ be a position such that $k \geq 3, i \geq 0$, and $(a_1,a_2,\ldots,a_k) \neq (0,1,\ldots,k-2,k+i)$. If $a_{k-2}+1 = a_{k-1}$ and $a_{k-1} \leq a_k-2$, then $\G(a_1,a_2,\ldots,a_k) = a_k - a_{k-1}$.
\end{corollary}
\begin{proof}
Using Theorems \ref{Max-P} and \ref{Max-V1}, one can prove the corollary by induction on $a_k - a_{k-1}$.
\end{proof}

\section{A property of positions of value 2}
\begin{sloppypar}
It has been shown in Section 2 that those positions of the form $(a_1,a_2,\ldots,a_k)$ whose Sprague-Grundy values are 0 have the same difference $a_k - a_{k-1}$. The similar property also holds for the positions of the Sprague-Grundy value $1$, except for the positions of the form $(0,1,\ldots,k-2,k)$. It is natural to ask whether this circumstance is repeated for those positions whose Sprague-Grundy values are larger than 1. We now show that this connection still holds, although to a lower extent, for those positions whose Sprague-Grundy values are 2. For higher Sprague-Grundy values, this connection dies. For example, the two positions $(10,17,19)$ and $(11,12,15)$ both have Sprague-Grundy value $3$ but have different values for $a_k - a_{k-1}$.
\end{sloppypar}

\smallskip
\begin{proposition}
Let $(a_1,a_2,\ldots,a_k)$ be a position such that $k \geq 3$, $i \geq 0$, and $(a_1,a_2,\ldots,a_k) \neq (0,1,\ldots,k-2,k+i)$. If $\G(a_1,a_2,\ldots,a_k) = 2$, then $a_k - a_{k-1} = 2$.
\end{proposition}

\begin{proof}
Set $A = (a_1,a_2,\ldots,a_k)$ . By Theorems \ref{Max-P} and \ref{Max-V1}, $a_k - a_{k-1} \geq 2$. If $a_{k-2}+1=a_{k-1}$ then the proposition is true by Corollary \ref{SG-V}. We now assume that $a_{k-2}+1 < a_{k-1}$. Set $B = (a_1,a_2,\ldots,a_{k-2},a_{k-1},a_{k-1}+2)$. Note that one can move from $B$ to $(a_1,a_2,\ldots,a_{k-2},a_{k-1}-1,a_{k-1})$ and $(a_1,a_2,\ldots,a_{k-2},a_{k-1},a_{k-1}+1)$. By Theorems \ref{Max-P} and \ref{Max-V1}, we have
\begin{align*}
\{\G(a_1,a_2,\ldots,a_{k-2},a_{k-1}-1,a_{k-1}),\G(a_1,a_2,\ldots,a_{k-2},a_{k-1},a_{k-1}+1)&\} \\
                                              = \{0,1&\}
\end{align*}
and so $\G(B) \geq 2$. If $\G(B) > 2$, then there is one move from $B$ to some position $C$ whose Sprague-Grundy value is 2. Note that position $C$ can be reached from $A$ and so $\G(A) > 2$, giving a contradiction. Therefore, $\G(B) = 2$ giving $A = B$, as otherwise there exists a move from $A$ to $B$.
\end{proof}

\medskip
\section{When can the computation of the Sprague-Grundy function be simplified?}
We establish two results by which calculating the Sprague-Grundy value of a position can be simplified by reducing the input. We first consider those positions whose small empty squares can be removed without changing the game.

\smallskip
\begin{theorem} \label{ReduceSize}
Consider the position $(a_1,a_2,\ldots,a_k)$ with $k \geq 3$. If $a_1 \leq k-1$ then $\G(a_1,a_2,\ldots,a_k) = \G(a_2-1,a_3-1,\ldots,a_k-1)$.
\end{theorem}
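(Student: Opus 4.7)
The plan is to argue by strong induction on the largest coordinate $a_k$. The base case is $a_k = k-1$, where both $A = (0,1,\ldots,k-1)$ and $A' = (0,1,\ldots,k-2)$ are terminal, so $\G(A) = \G(A') = 0$. For the inductive step, I will compare the legal target squares: from $A$ the coin $a_k$ may move into $J_A = \{0,1,\ldots,a_k-1\} \setminus \{a_1,\ldots,a_{k-1}\}$, while from $A'$ the coin $a_k - 1$ may move into $J_{A'} = \{0,1,\ldots,a_k-2\} \setminus \{a_2-1,\ldots,a_{k-1}-1\}$. I define $\phi \colon J_A \to J_{A'}$ by $\phi(j) = j - 1$ for $j \geq 1$ and $\phi(0) = a_1 - 1$ when $a_1 \geq 1$ (the only case with $0 \in J_A$). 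A direct check confirms $\phi$ is a bijection: the shift $j \mapsto j-1$ carries $\{j \in J_A : j \geq 1\}$ bijectively onto $J_{A'} \setminus \{a_1 - 1\}$, and the definition $\phi(0) = a_1 - 1$ completes the correspondence when $a_1 \geq 1$. Writing $A_j$ and $A'_{j'}$ for the positions reached, the theorem then reduces to the claim that $\G(A_j) = \G(A'_{\phi(j)})$ for every $j \in J_A$, since then the mex's of the option-values coincide.

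I split the claim into three cases on $j$. If $j > a_1$, then $a_1$ remains the smallest coin of $A_j$, the largest coin of $A_j$ is strictly below $a_k$, and $a_1 \leq k-1$, so the inductive hypothesis applies to $A_j$ and gives $\G(A_j) = \G(A'_{j-1})$ directly, because the reduction of $A_j$ (remove $a_1$, shift the remaining entries down by one) is literally $A'_{j-1}$. If $j = 0$ (so $a_1 \geq 1$), then $A_0 = (0, a_1, \ldots, a_{k-1})$ has smallest coin $0 \leq k-1$ and largest coin less than $a_k$, and the inductive hypothesis yields $\G(A_0) = \G(a_1 - 1, a_2 - 1, \ldots, a_{k-1} - 1) = \G(A'_{a_1 - 1})$. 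If $0 < j < a_1$ (so $a_1 \geq 2$), then one application of the inductive hypothesis to $A_j = (j, a_1, a_2, \ldots, a_{k-1})$ yields $\G(A_j) = \G(a_1 - 1, a_2 - 1, \ldots, a_{k-1} - 1)$, and two further applications to the $(k-1)$-coin positions $(a_1 - 1, a_2 - 1, \ldots, a_{k-1} - 1)$ and $A'_{j-1} = (j - 1, a_2 - 1, \ldots, a_{k-1} - 1)$ reduce both to the common value $\G(a_2 - 2, a_3 - 2, \ldots, a_{k-1} - 2)$, whence $\G(A_j) = \G(A'_{j-1})$.

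The hard part will be this last case at the boundary $k = 3$, where the two auxiliary applications would invoke the theorem on $2$-coin positions that lie just outside its stated scope. I will close this gap with a short direct computation showing $\G(0, m) = \G(1, m) = m - 1$ for every $m \geq 2$, by a simultaneous induction on $m$ exploiting that both move sets produce exactly the Sprague-Grundy values $\{0, 1, \ldots, m - 2\}$. Equivalently, this adjoins $k = 2$ as an additional base of the main induction, after which the three-case analysis above yields $\G(A) = \G(A')$ uniformly for every $k \geq 3$.
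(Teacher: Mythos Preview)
Your induction is correct, including the handling of the $k=3$ boundary via the auxiliary computation $\G(0,m)=\G(1,m)=m-1$. The bijection $\phi$ is well defined, and in each of the three cases the inductive hypothesis (extended to $k=2$) delivers $\G(A_j)=\G(A'_{\phi(j)})$, so the two mex sets coincide.

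The paper, however, takes a much shorter route. It observes that the coin at square $a_1$ can never become the rightmost coin: for that to happen the other $k-1$ coins would all have to lie strictly to its left, but there are only $a_1\le k-1$ squares available there, and if they are all filled the position is terminal. Hence square $a_1$ is permanently occupied throughout every play from $A$, and deleting that square from the strip (relabelling squares $>a_1$ down by one) yields a game-tree isomorphism between Max-Welter from $A$ and Max-Welter from $(a_2-1,\ldots,a_k-1)$. The equality of Sprague--Grundy values follows immediately from this isomorphism, with no induction or case analysis required. Your argument recovers the same fact by explicit bookkeeping on options; it is perfectly valid, but the paper's structural observation makes the result a one-line consequence and also explains \emph{why} the reduction works rather than merely verifying it.
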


\begin{proof}
Note that if $a_1 \leq k-1$ then the coin $a_1$ cannot be moved during the game. In fact, the coin $a_1$ can be moved if and only if there is an empty square $i$ such that $i < a_1$ even when all other coins are moved to squares on the left of the square $a_1$. This means there are at least $k$ squares on the left of the square $a_1$. This requires $a_1 \geq k$, giving a contradiction. Thus, if $a_1 \leq k-1$ then playing Max-Welter from the position $(a_1,a_2,\ldots,a_k)$ is equivalent to playing this game from the position $(a_2-1,a_3-1,\ldots,a_k-1)$ which is obtained from $(a_1,a_2,\ldots,a_k)$ by removing the square $a_1$.
\end{proof}

The next result is derived from Theorems \ref{Max-P} and \ref{Max-V1}.

\smallskip
\begin{theorem} \label{ReduceSize0}
Consider the position $(a_1,a_2,\ldots,a_k)$ with $k \geq 3$. If there exists an integer $i$ such that $a_i \geq i$ and $a_i+1=a_{i+1}$, then
\begin{align} \label{redu}
\G(a_1,a_2,\ldots,a_k) = \G(b_1,b_2, \ldots,b_j,a_i,a_{i+1},\ldots,a_k)
\end{align}
for all $j$-tuples $(b_1,b_2, \ldots,b_j)$ , where $j < a_i$ and $j+i-1$ is even.

\end{theorem}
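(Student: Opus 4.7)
The plan is to prove Theorem~\ref{ReduceSize0} by strong induction on $a_k$, showing directly that $\G(A) = \G(B)$, where $A = (a_1,\ldots,a_k)$ and $B = (b_1,\ldots,b_j,a_i,\ldots,a_k)$, by comparing the sets of Sprague--Grundy values of the children of $A$ and of $B$. Every legal move slides the rightmost coin $a_k$ to some empty square $c < a_k$; since $a_i$ and $a_{i+1} = a_i + 1$ are both occupied, every such $c$ satisfies either $c > a_{i+1}$ (Case~(a)) or $c < a_i$ (Case~(b)).

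For Case~(a), the empty squares in $(a_{i+1}, a_k)$ depend only on the common top $\{a_{i+1},\ldots,a_{k-1}\}$, so the sets of available $c$ are identical for $A$ and $B$. A move to the common $c$ produces children $A' = (a_1,\ldots,a_{i-1},a_i,\ldots,a_{k-1},c)$ and $B' = (b_1,\ldots,b_j,a_i,\ldots,a_{k-1},c)$ that share the top containing the adjacent pair $a_i, a_{i+1}$ with $a_i \geq i$; the bottoms, and hence the parity $j+i-1$, are untouched. Because the new maximum is strictly below $a_k$, the inductive hypothesis applies to $A'$ and immediately yields $\G(A') = \G(B')$, so Case~(a) contributes identical sets of $\G$-values to the two mex computations.

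Case~(b) is the substantive part. The children $A^*$ and $B^*$ have the common top $\{a_i,\ldots,a_{k-1}\}$ and bottoms of sizes $i$ and $j+1$, which agree in parity since $j+i-1$ is even. In the generic situation $k > i+1$, $a_i \geq i+1$, and $j+1 < a_i$, the inductive hypothesis applies to $A^*$ with new index $i+1$ and to $B^*$ with new index $j+2$; this forces $\G(A^*)$ and $\G(B^*)$ each to depend only on the common top and the matching parity, so every Case~(b) child carries a single value $g_b$ and contributes $\{g_b\}$ to both mex computations. The main obstacle will be the collection of boundary sub-cases $a_i = i$, $j+1 = a_i$, or $k = i+1$, where this uniform argument breaks down because the Case~(b) children can genuinely carry differing Sprague--Grundy values. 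To handle them I would apply Theorem~\ref{ReduceSize} to strip leading zeroes from any packed child and iterate until the hypothesis of the induction re-engages, and invoke Theorems~\ref{Max-P} and~\ref{Max-V1} to verify case-by-case that for every relevant value $v$, some Case~(b) child of $A$ has $\G$-value $v$ if and only if some Case~(b) child of $B$ does. The fully degenerate combination $a_i = i$ with $j + 1 = a_i$ is immediate because $A^*$ and $B^*$ then coincide literally.
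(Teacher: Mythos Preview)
Your plan follows the same skeleton as the paper's proof: split the moves of the rightmost coin into those landing above the adjacent pair $a_i,a_{i+1}$ and those landing below it, match the ``above'' children one-to-one via the inductive hypothesis, and argue that all ``below'' children carry a single common Grundy value. The paper organises this as a double induction, first on $k-i$ and then on $a_k-a_i$, whereas you run a single strong induction on $a_k$; the two schemes are interchangeable, and yours is arguably tidier since every child has strictly smaller top coin.

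Where your write-up is thinner than the paper's is exactly where you flag it: the boundary sub-cases. The case $k=i+1$ is indeed immediate from Theorems~\ref{Max-P} and~\ref{Max-V1}, and the fully degenerate case $a_i=i$ with $j=i-1$ is literal equality of children, as you note. But for $a_i=i$ with $j\le i-3$ your suggested repair---``apply Theorem~\ref{ReduceSize} to strip leading zeroes \ldots\ until the hypothesis re-engages''---does not work as stated: stripping the packed child $(0,1,\ldots,i+1,a_{i+2},\ldots,a_{k-1})$ keeps the adjacent pair exactly on the boundary $a'_{i'}=i'$ at every stage and never produces $a'_{i'}\ge i'+1$, so the inductive hypothesis never re-engages from that side. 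What does work is to apply the inductive hypothesis to $B^\ast$ instead (there the pair sits at index $j+2$ and $a_i=i\ge j+2$ since $j\le i-3$), which shows all Case~(b) children of $B$ share one value, and then compare to the unique Case~(b) child of $A$ by passing through the instance $j=i-1$ of $B$. The paper is equally terse at this spot---it simply asserts ``by the inductive hypothesis on $k-i$, $\G(A_j)=\G(B_l)$''---so you are not missing an idea present there; but a complete version of either argument needs this extra sentence.
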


\begin{proof}
Set $A = (a_1,a_2,\ldots,a_k)$. We prove the theorem by induction on $k-i$. As $i-1$ and $j$ have the same parity, (\ref{redu}) holds when $k-i = 1$ by Theorems \ref{Max-P} and \ref{Max-V1}. Assume that (\ref{redu}) holds for $k-i$ such that $1\leq k-i \leq n$. We show that (\ref{redu}) holds when $k-i = n+1$ by induction on $a_k - a_i$. If $a_{k-1}+1=a_k$, then (\ref{redu}) holds by Theorems \ref{Max-P} and \ref{Max-V1}. Therefore, we can assume that $a_{k - 1}+1 < a_k$. Note that $a_k-a_i = k-i$ if and only if there is no empty square between the two squares $a_i$ and $a_k$. Hence $a_k - a_i \geq k-i+1$.

First, consider the case $a_k - a_i = k-i+1$. Note that the square $a_{k - 1}+1$ is the only empty square between the two squares $a_i$ and $a_k$. Moreover, $a_{k - 2}+1 = a_{k-1}$. Let $s$ be the number of empty squares on the left of the square $a_i$ in the position $A$. Then $s = a_i-i+1 \geq 1$. Denote these $s$ empty squares by $a'_1,a'_2,\ldots, a'_s$. For each $j$ such that $1 \leq j \leq s$, denote by $A_j$ the position obtained from $A$ by moving the coin from the square $a_k$ to the square $a'_j$. Set $B = (b_1,b_2, \ldots,b_j,a_i,a_{i+1}, \ldots,a_k)$. Let $t$ be the number of empty squares on the left of the square $a_i$ in the position $B$. Then $t = a_i - j \geq 1$. Denote these $t$ empty squares by $b'_1,b'_2,\ldots, b'_t$. For each $j$ such that $1 \leq j \leq t$, denote by $B_j$ the position obtained from $B$ by moving the coin from the square $a_k$ to the square $b'_j$. We have
\begin{align*}
\begin{cases}
\G(A) = \operatorname{mex}\{\G(A_j), &\G(a_1,a_2,\ldots, a_{k-1},a_{k-1}+1) | 1 \leq j \leq s\},\\
\G(B) = \operatorname{mex}\{\G(B_l), &\G(b_1,b_2, \ldots,b_j,a_i,a_{i+1},\ldots, a_{k-1},a_{k-1}+1) \\
                     & | 1 \leq l \leq t\}.
\end{cases}
\end{align*}
By Theorems \ref{Max-P} and \ref{Max-V1},
\[\G(a_1,a_2,\ldots, a_{k-1},a_{k-1}+1) = \G(b_1,b_2, \ldots,b_j,a_i,a_{i+1},\ldots, a_{k-1},a_{k-1}+1)\]
and $\G(A_j) = \G(B_l)$ for all $j$ and $l$. Therefore, (\ref{redu}) holds for $a_k - a_i = k-i+1$.

{\sloppy
Assume that (\ref{redu}) holds when $a_k - a_i \leq m$ for some $m$ such that ${m \geq k-i+1}$. We show that (\ref{redu}) holds when $a_k - a_i = m+1$. Note that there are $a_k-a_i-k+i$ empty squares between the two squares $a_i$ and $a_k$. Also note that ${a_k-a_i-k+i=m-n \geq 2}$. Denote these empty squares by $c_1,c_2, \ldots,c_{m-n}$. For each $h$ such that $1 \leq h \leq m-n$, denote by $A_h'$ (resp.~$B_h'$) the position obtained from $A$ (resp.~$B$) by moving the coin from the square $a_k$ to the square $c_h$. With positions $A_j$ and $B_l$, where $1 \leq j \leq s$ and $1 \leq l \leq t$, defined as above. We have}
\begin{align*}
\begin{cases}
\G(A) &= \operatorname{mex}\{ \G(A_j), \G(A_t') | 1 \leq j \leq s, 1 \leq t \leq m-n\},\\
\G(B) &= \operatorname{mex}\{ \G(B_l), \G(B_t') | 0 \leq l \leq a_i-1, 1 \leq t \leq m-n\}.
\end{cases}
\end{align*}
By the inductive hypothesis on $a_k-a_i$ for the case $k-i = n+1$, we have $\G(A_t') = \G(B_t')$ for all $t$ such that $1 \leq t \leq m-n$. By the inductive hypothesis on $k-i$, we have $\G(A_j) = \G(B_l)$ for all $j, l$. Therefore, $\G(A) = \G(B)$. This completes the proof.

\end{proof}

\smallskip
\begin{corollary}
Consider the position $(a_1,a_2,\ldots,a_k)$ with $k \geq 3$. If there exists an odd integer $i$ such that $a_i \geq i$ and $a_i+1=a_{i+1}$, then
\begin{align*}
\G(a_1,a_2,\ldots,a_k) = \G(a_i,a_{i+1}, \ldots,a_k).
\end{align*}
\end{corollary}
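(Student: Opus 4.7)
The plan is to obtain the corollary as the special case $j=0$ of Theorem \ref{ReduceSize0}. Because the hypothesis now stipulates that $i$ is odd, the integer $i-1$ is even, so the parity requirement ``$j+i-1$ is even'' of Theorem \ref{ReduceSize0} is automatically satisfied by the empty tuple. The inequality $j < a_i$ becomes $0 < a_i$, and this holds since $a_i \geq i \geq 1$.

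With these choices, the right-hand side of identity (\ref{redu}) collapses to $\G(a_i, a_{i+1}, \ldots, a_k)$ because the prefix $(b_1, \ldots, b_j)$ is empty. Thus the conclusion of Theorem \ref{ReduceSize0} reads exactly as the desired equality $\G(a_1, a_2, \ldots, a_k) = \G(a_i, a_{i+1}, \ldots, a_k)$, and the proof is complete in a single invocation.

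The only point warranting verification is that $j=0$ is a legitimate input. Inspecting the argument for Theorem \ref{ReduceSize0}, the indexing ranges over $0 \leq l \leq a_i - 1$, so an empty prefix is already covered by the inductive apparatus there. Moreover, since $a_i + 1 = a_{i+1}$ forces $i+1 \leq k$, the reduced position $(a_i, a_{i+1}, \ldots, a_k)$ has at least two coins and therefore fits the framework of the earlier results. No further obstacle or additional induction is required; the statement follows directly.
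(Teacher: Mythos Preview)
Your proposal is correct and matches the paper's intended derivation: the corollary is stated without proof precisely because it is the immediate specialization $j=0$ of Theorem~\ref{ReduceSize0}, with the parity condition $j+i-1$ even forced by $i$ being odd and the bound $j<a_i$ ensured by $a_i\geq i\geq 1$. Your additional checks that the empty prefix is admissible and that the reduced position still has at least two coins are accurate and complete the argument.
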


\section{On the periodicity of the Sprague-Grundy values}

In this section, we state two theorems on the periodicity of Sprague-Grundy values. Recall that a sequence $\{s_i\}$ is \emph{periodic} if there exist integers $p$ and $n_0$ such that $s_{n + p} = s_n$, where $n \geq n_0$. The smallest such number $p$ is the \emph{period}. The sequence is \emph{additive periodic} if there exist $p$ and $n_0$ such that $s_{n + p} = s_n+p$, where $n \geq n_0$. The following theorem describes a simple additive periodicity of Sprague-Grundy values when the right end coin in a position is translated rightwards.

\smallskip
\begin{theorem} \label{Add Period}
Let $k \geq 2$ and let $a_1,a_2,\ldots,a_{k-1},a_k$ be nonnegative integers such that $a_1 < a_2 < \cdots < a_k$. There exists a positive integer $n$ such that $n \leq a_k$ and
\[
\G(a_1,a_2,\ldots,a_{k-1},a_k+n+i) = a_k+i, \quad \forall i \geq 0.
\]
\end{theorem}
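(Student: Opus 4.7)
The plan is to fix $a_1 < a_2 < \cdots < a_{k-1}$ and study the sequence $g(m) := \G(a_1,a_2,\ldots,a_{k-1},m)$ as the right-end coordinate $m$ varies over integers greater than $a_{k-1}$. I split the moves from $(a_1,a_2,\ldots,a_{k-1},m)$ into two families: the \emph{short} moves, which place the right-end coin at some empty $j$ with $a_{k-1} < j < m$ and produce a position of the same shape $(a_1,a_2,\ldots,a_{k-1},j)$; and the \emph{deep} moves, which place it at some empty $j < a_{k-1}$ and produce a position whose rightmost coin is at $a_{k-1}$. The crucial observation is that the set $S$ of Sprague--Grundy values reachable by deep moves depends only on $a_1,\ldots,a_{k-1}$ and not on $m$.

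Writing $g_j := g(a_{k-1}+j)$ for $j \geq 1$, the mex formula reads
\[
g_j = mex\bigl(S \cup \{g_1, g_2, \ldots, g_{j-1}\}\bigr).
\]
A one-line induction shows that $g_j$ is precisely the $j$-th smallest nonnegative integer outside $S$. Setting $s := |S|$ and $M := \max S$ (with the convention $M = -1$ if $S = \emptyset$), this means $g_j = j + s - 1$ for every $j \geq j_0 := M - s + 2$. Defining
\[
n := a_{k-1} - s + 1,
\]
this translates to $g(m) = m - n$ for every $m \geq a_{k-1} + j_0$, equivalently for every $m$ with $m - n \geq M + 1$.

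To upgrade this asymptotic identity to the statement in the theorem, I need both $n \leq a_k$ and $a_k \geq M + 1$. Both follow from the same elementary estimate: the Sprague--Grundy value of any Max-Welter position of size $k$ whose rightmost coin sits at some square $b_k$ is at most the number $b_k - k + 1$ of available moves. Each deep-move target has rightmost coin $a_{k-1}$, so $M \leq a_{k-1} - k + 1$ and $s \leq a_{k-1} - k + 2$. This forces $a_k \geq a_{k-1} + 1 \geq M + 1$, which lets me apply the asymptotic identity at $m = a_k + n + i$ for every $i \geq 0$ and conclude $g(a_k + n + i) = a_k + i$. The same inequalities give $1 \leq k - 1 \leq n \leq a_{k-1} + 1 \leq a_k$, so $n$ is the required positive integer at most $a_k$.

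The main conceptual step is the identification of $g_j$ with the $j$-th nonnegative integer outside $S$; once this is in hand the rest is bookkeeping. The only place where a real obstacle could arise is in confirming that the linear regime begins no later than $m = a_k + n$, but after substituting the formula for $n$ this reduces to $a_k \geq M + 1$, which is forced by the trivial move-count bound on Sprague--Grundy values. The edge case $S = \emptyset$ is absorbed by the convention $M = -1$, and no additional hypotheses on the $a_i$ are needed.
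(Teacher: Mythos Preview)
Your argument is correct. The identification of $g_j$ with the $j$-th nonnegative integer not in $S$ is sound (the option sets are nested, so the $g_j$ are distinct and each lies outside $S$; an easy induction finishes it), and your bounds $M\le a_{k-1}-k+1$ and $s\le a_{k-1}-k+2$ follow from the move-count estimate $\G\le b_k-k+1$. The deductions $n=a_{k-1}-s+1\in[k-1,\,a_{k-1}+1]\subseteq[1,a_k]$ and $a_k\ge M+1$ then give exactly the statement.

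The paper reaches the same conclusion by a different route. It first argues non-constructively, via a pigeonhole count on the $a_k+1$ distinct values $\G(a_1,\ldots,a_{k-1},a_k),\ldots,\G(a_1,\ldots,a_{k-1},2a_k)$, that the value $a_k$ must occur somewhere in that window (using the same move-count bound to rule out values exceeding $a_k$ before $a_k$ itself appears). It then shows separately, by a short contradiction, that once the value $a_k$ is attained the sequence increases by exactly $1$ at each step. Your approach is more structural: by naming the fixed set $S$ of deep-move Grundy values you get an explicit closed form $n=a_{k-1}-|S|+1$ and a description of the entire sequence $g_j$, not just its eventual linear tail. The paper's proof is a bit shorter to write down and needs no explicit bookkeeping on $S$, but yours yields strictly more information (an exact $n$, and in fact the whole pre-periodic behaviour of $g$).
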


\begin{proof}
First, we prove that there exists a positive integer $n$ such that $n \leq a_k$ and
\[
\G(a_1,a_2,\ldots,a_{k-1},a_k+n) = a_k.
\]
Consider the sequence
\begin{align*}
(s): \quad \G(a_1,a_2,\ldots,a_{k-1},a_k), \G(a_1,a_2,\ldots,a_{k-1},a_k+1), \ldots\\
             \ldots, \G(a_1,a_2,\ldots,a_{k-1},a_k+a_k).
\end{align*}
\begin{sloppypar} 
We claim that this sequence contains $a_k$. Since there is a move from $(a_1,a_2,\ldots,a_{k-1},a_k+j)$ to $(a_1,a_2,\ldots,a_{k-1},a_k+i)$ if $i < j$, the sequence $(s)$ contains $a_k+1$ pairwise distinct values. Assume by contradiction that the sequence $(s)$ does not contain $a_k$. Then it must contain some $m$ such that $m > a_k$. Assume that $\G(a_1,a_2,\ldots,a_{k-1},a_k+j) = m$ for some $j$ such that $j \leq a_k$. Then there exists a move from $(a_1,a_2,\ldots,a_{k-1},a_k+j)$ to some position $B$ whose Sprague-Grundy value is $a_k$. Moreover, since the sequence $(s)$ does not contain $a_k$, $B$ is of the form $(b_1,b_2,\ldots,b_k)$, where $b_k \leq a_k-1$. Note that there are fewer than $a_k-1$ empty squares on the left of the square $b_k$ in the position $B$ and so $\G(B) \leq a_k-1$, giving a contradiction. Therefore,
\[
\G(a_1,a_2,\ldots,a_{k-1},l) = a_k
\]
for some $l$ such that $a_k \leq l \leq a_k+a_k$. We now prove the existence of $n$. Since there are fewer than $a_k-1$ empty squares on the left of the square $a_k$, $\G(a_1,a_2,\ldots,a_{k-1},a_k) < a_k$ and so $l > a_k$. Let $n = l-a_k$. We have $n$ as required.
\end{sloppypar}

Next, we show that
\[
\G(a_1,a_2,\ldots,a_{k-1},a_k+n+j+1) = \G(a_1,a_2,\ldots,a_{k-1},a_k+n+j)+1
\]
for all $j$ such that $j \geq 0$. Assume by contradiction that there exists nonnegative integer $j$ such that
\[
\G(a_1,a_2,\ldots,a_{k-1},a_k+n+j+1) \geq \G(a_1,a_2,\ldots,a_{k-1},a_k+n+j)+2.
\]
Set $r = \G(a_1,a_2,\ldots,a_{k-1},a_k+n+j)$. Then there exists one move from $(a_1,a_2,\ldots,a_{k-1},a_k+n+j+1)$ to some position $C$ whose Sprague-Grundy value is $r+1$. One can see that every position reached from $C$, including $C$, can be reached from $(a_1,a_2,\ldots,a_{k-1},a_k+n+j)$ and so
\[\G(a_1,a_2,\ldots,a_{k-1},a_k+n+j) > \G(C) = r+1,\]
giving a contradiction. This completes the proof.
\end{proof}

We next consider how Sprague-Grundy values vary when all coins in a position are rightwards translated. The following theorem shows that when positions have two neighbouring coins, the Sprague-Grundy values are invariant under rightward translation.

\smallskip
\begin{theorem} \label{Period p=1}

Let $k \geq 3$ and let $a_1,a_2,\ldots,a_{k-1},a_k$ be nonnegative integers such that $a_1 < a_2 <\ldots < a_k$ and $a_k > a_{k-1}+1$. If there exists $i$ such that $i \leq k-2$, $a_i \geq k-2$, and $a_i+1=a_{i+1}$, then
\begin{align} \label{i,i+1}
\G(a_1+1,a_2+1,\ldots,a_k+1) = \G(a_1,a_2,\ldots,a_k).
\end{align}
\end{theorem}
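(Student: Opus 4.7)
The plan is to proceed by induction on the number of coins $k$. The base case $k=3$ forces $i=1$ (since $i \le k-2 = 1$), so $A = (a_1, a_1+1, a_3)$ and $A' = (a_1+1, a_1+2, a_3+1)$ with $a_1 \ge 1$ and $a_3 \ge a_1+3$. Both positions have consecutive penultimate pair and, since their first coins are positive, neither is of the exceptional form $(0,1,k+j)$ excluded by Corollary \ref{SG-V}. Hence $\G(A) = a_3 - (a_1+1) = (a_3+1)-(a_1+2) = \G(A')$.

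For the inductive step with $k \ge 4$, I split on the consecutive-pair index $i$. When $i \ge 2$, Theorem \ref{ReduceSize0} applies to $A$ with the pair $(a_i, a_{i+1})$ and to $A'$ with $(a_i+1, a_{i+1}+1)$ (the required $a_i+1 \ge i$ follows from $a_i \ge k-2 \ge i$). Choosing $j = i-1$ and the replacement tuple $(0,1,\dots,i-2)$ in both reductions gives
\[
\G(A) = \G(0,1,\dots,i-2, a_i, \dots, a_k),\qquad \G(A') = \G(0,1,\dots,i-2, a_i+1, \dots, a_k+1).
\]
Applying Theorem \ref{ReduceSize} to each (the leading $0$ satisfies $0 \le k-1$) produces size-$(k-1)$ positions
\[
P = (0,1,\dots,i-3, a_i-1, a_{i+1}-1, \dots, a_k-1),\qquad P' = (0,1,\dots,i-3, a_i, a_{i+1}, \dots, a_k).
\]
One checks directly that $P$ satisfies the hypotheses of Theorem \ref{Period p=1} at size $k-1$ with new index $i-1$, so the inductive hypothesis yields $\G(P) = \G(P+1)$. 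A further application of Theorem \ref{ReduceSize0} to $P'$ (replacing $(0,1,\dots,i-3)$ with $(1,2,\dots,i-2)$) gives $\G(P') = \G(P+1)$, and so $\G(A) = \G(P) = \G(P+1) = \G(P') = \G(A')$.

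The delicate case is $i=1$, where no prefix is available to normalise via Theorem \ref{ReduceSize0} and Theorem \ref{ReduceSize} applies to $A$ only when $a_1 \le k-1$. Here I would compare the move-sets of $A$ and $A'$ directly: the translation $e \mapsto e+1$ bijects the empty squares below $a_k$ in $A$ with all but one of the empty squares at most $a_k$ in $A'$, the exception being the square $0$. The corresponding $A'$-move lands in $(0, a_1+1, \dots, a_{k-1}+1)$, whose Grundy value equals $\G(a_1, a_2, \dots, a_{k-1})$ by Theorem \ref{ReduceSize}. For each paired move $A \to X_e$ and $A' \to X_e+1$, I expect the equality $\G(X_e) = \G(X_e+1)$ via a strengthened inductive hypothesis (on $a_k$) when $X_e$ still satisfies the hypotheses of Theorem \ref{Period p=1}, whereas if the last two coins of $X_e$ are consecutive, Theorems \ref{Max-P} and \ref{Max-V1} force $\G(X_e), \G(X_e+1) \in \{0,1\}$ with a parity-controlled swap under translation that leaves the multiset $\{0,1\}$-content unchanged.

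The main obstacle will be the $i=1$ subcase, and within it showing that the extra Grundy value $\G(a_1, \dots, a_{k-1})$ contributed by the $A'$-move to $0$ is redundant in the mex computation of $\G(A')$. I anticipate exhibiting an explicit move from $A$ whose destination has Grundy value $\G(a_1, \dots, a_{k-1})$ (a natural candidate being a move of $a_k$ to a small empty square, reduced via Theorem \ref{ReduceSize}), so that this extra value is already present among the Grundy values of moves from $A$. Combined with the parity bookkeeping for $\{0,1\}$-valued destinations, this should give equality of mex values and complete the proof.
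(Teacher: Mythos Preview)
Your base case $k=3$ via Corollary \ref{SG-V} is correct and cleaner than the paper's argument, which instead runs a separate inner induction on $a_3-a_1$ comparing mex-sets. Your $i\ge 2$ reduction through Theorems \ref{ReduceSize0} and \ref{ReduceSize} down to size $k-1$ is also correct (all the parity and size checks go through as you say), and this structural reduction is genuinely different from the paper, which treats all $i$ uniformly by the same mex-set comparison as for $k=3$ and simply declares the general case ``essentially the same''. Your route for $i\ge 2$ buys a clean induction with no inner parameter; the paper's route avoids invoking Theorems \ref{ReduceSize0} and \ref{ReduceSize} but leaves more to the reader.

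For $i=1$, however, your identification of the ``main obstacle'' is off. The extra $A'$-move to square $0$ is automatically redundant: the resulting position $(0,a_1{+}1,a_1{+}2,a_3{+}1,\ldots,a_{k-1}{+}1)$ has the consecutive pair at index $2$, so Theorem \ref{ReduceSize0} (with $j=1$) shows its Grundy value is independent of the first entry. Since $a_1\ge k-2\ge 1$, the $A'$-move to square $1$ exists and yields the \emph{same} value, and that move already corresponds under $e\mapsto e+1$ to the $A$-move to $0$. So no separate redundancy argument is needed, and your proposed search for an explicit $A$-move matching $\G(a_1,\ldots,a_{k-1})$ is unnecessary.

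What does remain for $i=1$ is exactly the mex-set comparison with an inner induction (on $a_k-a_1$, as in the paper's $k=3$ case). You should check two things you only gesture at: first, that every option $X_e$ whose last two coins are \emph{not} consecutive still carries the pair $(a_1,a_1{+}1)$ at some index $\le k-2$ with value $\ge k-2$ (it does: the pair sits at index $1$ if $e>a_2$ and at index $2$ if $e<a_1$), so the inner hypothesis applies; second, that among the options whose last two coins \emph{are} consecutive, both parities of $a_{k-1}+k$ actually occur (they do: $e=a_{k-1}{+}1$ always gives one parity, and either $e=a_{k-1}{-}1$ or any $e<a_{k-2}$ gives the other, depending on whether $a_{k-1}=a_{k-2}{+}1$), so the $\{0,1\}$-swap under translation leaves the \emph{set} of option values unchanged. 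Once these are written out, your $i=1$ argument coincides with the paper's.
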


\begin{proof}

We prove (\ref{i,i+1}) by induction on $k$. We first show that (\ref{i,i+1}) holds for $k = 3$ by induction on $a_k-a_i$ with $a_k-a_i \geq 3$. Note that $i = 1$ and $k = 3$ in this case. Assume that $a_k - a_i = 3$. We have
\begin{align*}
\G(a_1,a_2,a_3) = \operatorname{mex}\{ \G(i,a_1,a_2), \G(a_1,a_2,a_2+1) | 0 \leq i \leq a_1-1\}
\end{align*}
and
\begin{align*}
\G(a_1+1,&a_2+1,a_3+1) = \\
         &\operatorname{mex}\{ \G(j,a_1+1,a_2+1), \G(a_1+1,a_2+1,a_2+2) | 0 \leq j \leq a_1\}.
\end{align*}
By Theorems \ref{Max-P} and \ref{Max-V1}, we have $\G(a_1+1,a_2+1,a_2+2) = \G(i,a_1,a_2)$ for all $i$ and $\G(j,a_1+1,a_2+1) = \G(a_1,a_2,a_2+1)$ for all $j$ and so
\[\G(a_1+1,a_2+1,a_3+1) = \G(a_1,a_2,a_3).\]
Assume that, for $k = 3$, (\ref{i,i+1}) holds when $3 \leq a_k-a_i \leq n$. We show that (\ref{i,i+1}) holds when $a_k-a_i = n+1$. We have

\begin{align*}
\G(a_1,&a_2,a_3) = \\
           &\operatorname{mex}\{ \G(i,a_1,a_2), \G(a_1,a_2,s) | 0 \leq i \leq a_1-1, a_2+1 \leq s \leq a_3-1\}
\end{align*}
and
\begin{align*}
&\G(a_1+1,a_2+1,a_3+1) = \\
&\operatorname{mex}\{\G(j,a_1+1,a_2+1), \G(a_1+1,a_2+1,t) | 0 \leq j \leq a_1, a_2+2 \leq t \leq a_3\}.
\end{align*}
For $s = a_2+1$, by Theorems \ref{Max-P} and \ref{Max-V1}, we have
\[\G(j,a_1+1,a_2+1) = \G(a_1,a_2,s), \quad \forall j.\]
For $t = a_2+2$, by Theorems \ref{Max-P} and \ref{Max-V1}, we have
\[\G(a_1+1,a_2+1,t) = \G(i,a_1,a_2), \quad \forall i.\]
By inductive hypothesis, we have $\G(a_1+1,a_2+1,t) = \G(a_1,a_2,t-1)$, where $a_2+3 \leq t \leq a_3$, and so
\[\{\G(a_1+1,a_2+1,t) | a_2+3 \leq t \leq a_3\} = \{\G(a_1,a_2,s) | a_2+2 \leq s \leq a_3-1\}.\]
Therefore,
\[\G(a_1+1,a_2+1,a_3+1) = \G(a_1,a_2,a_3).\]

Let $m \geq 3$ and assume that (\ref{i,i+1}) holds for $k$ with $3 \leq k \leq m$. We show that (\ref{i,i+1}) holds for $k = m+1$. The proof for this case is essentially the same as that of the case $k = 3$ and we leave the details to the reader. By the principle of induction, (\ref{i,i+1}) holds for all $k \geq 3$.

\end{proof}

We end this section with three conjectures on the periodicity of Sprague-Grundy values, based on computations we have performed.


We call the {\it 2-coin $k$-sequence} the sequence $\{(a_n,b_n)\}_{n \geq 1}$ of all positions of the Sprague-Grundy value 2 in which $a_i < a_j$ if $i < j$. We have the following conjecture on the sequence members of $\{(a_n,b_n)\}_{n \geq 1}$ when $n$ is large enough.

\begin{conjecture} \label{2coin}
Let $\{(a_n,b_n)\}_{n \geq 1}$ be the 2-coin $k$-sequence of Max-Welter. When $n$ is large enough, we have
\begin{align} \label{2c}
b_n - a_n = k+1-\lfloor\frac{k+1}{2}\rfloor,
\end{align}
which is a constant.
\end{conjecture}

Our computation shows that the convergence of $b_n - a_n$ in (\ref{2c}) comes quite early. For example, up to $k = 10$, the convergence happens after 50.

We now consider the sequence $\{\G(a+i,b+i)\}_{i \geq 0}$, where $a < b$. For each $i$, $(a+i,b+i)$ belongs to some $k_i$-sequence. When $i$ is large enough, Conjecture \ref{2coin}, if held, implies that $2(b-a)-2 \leq k_i \leq 2(b-a)-1$. We now give a more general conjecture.

\smallskip
\begin{conjecture} \label{Obs0}
Let $(a_1,a_2,\ldots,a_k)$ be a position with $k \geq 2$. The sequence
\[\{\G(a_1,a_2,\ldots,a_{k-2},a_{k-1}+i,a_k+i)\}_{i \geq 0}\]
is ultimately periodic with the period $2(a_k-a_{k-1})$. Moreover, the periodic sequence members are bounded by $2(a_k - a_{k-1})-2$ and $2(a_k - a_{k-1})-1$.
\end{conjecture}

Conjecture \ref{Obs0} represents the periodicity of the Sprague-Grundy values of those positions obtained by rightwards translating the last two coins from a position. The following is our observation on the periodicity of the Sprague-Grundy values of those positions obtained by rightwards translating the last any number of coins from a position.

\smallskip
\begin{conjecture} \label{Obs3}
Let $(a_1,a_2,\ldots,a_k)$ be a position such that $k \geq 3$, $a_k > a_{k-1}+1$. Let $l \leq k-2$. Consider the sequence
\[(s') = \{\G(a_1,a_2,\ldots,a_{l-1},a_l+i,\ldots,a_k+i)\}_{i \geq 0}.\]
If there exist distinct $i,j$ such that $i \geq l$, $j \geq l$, and $a_{i+1}-a_i \neq a_{j+1}-a_j$, then the sequence $(s')$
is ultimately periodic with period $p=1$. Otherwise, the sequence is ultimately periodic with $p = 2(a_k - a_{k-1})$.
\end{conjecture}

\section{The mis\`{e}re version}
This section examines the mis\`{e}re version of the game Max-Welter. We show that the game Max-Welter belongs to a proper subclass of the so-called tame games. We first revise some background of the mis\`{e}re play.

Recall that a game is under \emph{mis\`{e}re} (resp,~\emph{normal}) convention if the player who makes the last move is declared to be the loser (resp.~winner). Mis\`{e}re games have been intensively studied and exposed many notable properties; see \cite{Gur1} for reference of papers on mis\`{e}re games.

For each game $G$ played under normal convention, denote by $G^-$ the mis\`{e}re version of $G$. Define by $\G$ and $\G^-$ the Sprague-Grundy functions of the games $G$ and $G^-$ respectively. The later function is computed as follows: the terminal position has value 1 and the mis\`{e}re Sprague-Grundy value $\G^-(p)$ of a non-terminal position $p$ is the smallest nonnegative integer which is different to $\G^-(q)$ for any $q$ such that there exists a move from $p$ to $q$. Also note that a position $p$ in the mis\`{e}re game $G^-$ is second player winning if and only if $\G^-(p) = 0$.

We say that the two functions $\G$ and $\G^-$ {\em swap} values 0 and 1 if for every position $p$, we have $(\G(p),\G^-(p)) \in \{(0,1), (1,0), (k,k) | k \geq 2\}$. We first show that the game Max-Welter and its mis\`{e}re form swap values 0 and 1. The proof of the following two theorems are essentially the same as those of Theorems \ref{Max-P} and \ref{Max-V1} respectively. We leave the details to the reader.

\smallskip
\begin{theorem}\label{MMax-P}
A position $(a_1,a_2,\ldots,a_k)$, where $k \geq 2$, is a $\P$-position in mis\`{e}re Max-Welter if and only if one of the following holds:
\begin{enumerate}
\item [$($a$)$] $(a_1,a_2,\ldots,a_k) = (0,1,\ldots,l,l+2,l+3,\ldots,k)$ for some $l \leq k-2$,
\item [$($b$)$] $a_k=a_{k-1}+1$ and $a_{k-1}+k$ is odd.
\end{enumerate}
\end{theorem}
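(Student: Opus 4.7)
The plan is to argue exactly as in the proof of Theorem~\ref{Max-V1}, calling positions satisfying (a) or (b) \emph{misère $0$-positions}. In the misère convention the terminal position $(0,1,\ldots,k-1)$ is automatically an $\N$-position, so the task reduces to proving three facts: (i) the terminal is not a misère $0$-position; (ii) no move between misère $0$-positions is possible; (iii) every non-terminal position that is not a misère $0$-position admits a move to one.

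Fact (i) is immediate: $a_k = k-1 \neq k$ rules out (a), and $a_{k-1}+k = 2k-2$ is even, ruling out (b). For fact (ii), I would copy verbatim Part~1 of the proof of Theorem~\ref{Max-V1}. From type (a), $A = (0,1,\ldots,l,l+2,\ldots,k)$, the only legal move sends $a_k = k$ to the unique empty slot $l+1$ and lands on the terminal, which is not a misère $0$-position by (i). From type (b), any move produces a position $B$ with $b_k = a_{k-1}$; the observation that $a_{k-1}-1+k$ and $a_{k-1}+k$ have opposite parities rules (b) out for $B$, and $a_{k-1} \neq k$ (otherwise $a_{k-1}+k = 2k$ would be even) rules (a) out for $B$.

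For fact (iii), I would split on $a_k - a_{k-1}$. When $a_k \geq a_{k-1}+2$, the subcase analysis in Part~2 of the proof of Theorem~\ref{Max-V1} applies directly: depending on whether $(a_1,\ldots,a_{k-1})$ is terminal and on the parity of $a_{k-1}+k$, one exhibits a move landing in form (a) or form (b). The case absent from the normal value-$1$ analysis is $a_k = a_{k-1}+1$ with $a_{k-1}+k$ even, i.e., a non-terminal normal $0$-position. Here $a_{k-1}-1+k$ is odd, and non-terminality forces an empty square below $a_{k-1}$. Moving $a_k$ either onto $a_{k-1}-1$ (when $a_{k-2} < a_{k-1}-1$, so that square is empty) or onto any empty $j \leq a_{k-2}-1$ (when $a_{k-2} = a_{k-1}-1$) yields a position $B$ with $b_k = a_{k-1}$, $b_{k-1} = a_{k-1}-1$, and $b_{k-1}+k$ odd, hence of form (b).

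The main obstacle is handling this extra sub-case correctly: the normal $0$-positions other than the terminal must be re-classified as misère $\N$-positions by exhibiting explicit moves into the new $\P$-set, something the value-$1$ analysis did not need because it had those positions excluded by hypothesis. All remaining steps are the same parity manipulations as in Theorem~\ref{Max-V1}, consistent with the authors' remark that the proof is essentially the same.
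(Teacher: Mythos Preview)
Your proposal is correct and matches what the paper intends: the authors give no proof beyond the remark that it is ``essentially the same'' as Theorems~\ref{Max-P} and~\ref{Max-V1}, and your argument is precisely the Theorem~\ref{Max-V1} proof with the extra sub-case $a_k=a_{k-1}+1$, $a_{k-1}+k$ even handled by the move described in Theorem~\ref{Max-P}'s proof. Your identification of that sub-case as the only new ingredient, and your verification that type-(a) cannot arise after a move from a type-(b) position (via $a_{k-1}\neq k$), are exactly the small additions needed.
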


\smallskip
\begin{theorem} \label{MMax-V1}
A position $(a_1,a_2,\ldots,a_k)$ with $k \geq 2$ has mis\`{e}re Sprague-Grundy value $1$ if and only if $a_k=a_{k-1}+1$ and $a_{k-1}+k$ is even.
\end{theorem}

A game is {\em tame} (first defined by J.H. Conway in \cite{con}) if in every position $p$, either $\G(p) = \G^-(p)$ or $\G(p) + \G^-(p) = 1$. The classic and generalized versions of Nim \cite{con, Gur1} provide examples. In \cite{Gur1}, V.A. Gurvich introduces a proper subclass of tame games that he calls {\em strongly miserable} games. These are games whose $\G$ and $\G^-$ functions (i) swap values 0 and 1, and (ii) agree on other values. Note that there is no position $p$ in a strongly miserable such that either $\G(p) = \G^-(p) = 0$ or $\G(p) = \G^-(p) = 1$. The game of Nim is tame but not strongly miserable \cite{Gur1}. Examples of strongly miserable games can be found in \cite{Gur1}; they include the so-called subtraction games \cite{Ann-mis, sub-mis}, as well as some generalizations of Nim analyzed in \cite{Wyt-mis}.

In particular, Gurvich \cite[Theorem 3]{Gur1} establishes several sufficient conditions for a game to be strongly miserable. One of these conditions is that $\P$-positions of the normal and mis\`{e}re version are disjoint. By Gurvich's result and Theorems \ref{MMax-P} and \ref{MMax-V1}, the game Max-Welter is strongly miserable. We now give a direct proof of this result, independent of Gurvich's work (for those who would like a straight approach).

\begin{theorem} \label{SMGe}
The game Max-Welter is strongly miserable
\end{theorem}

\begin{proof}

Note that a game can be described as a finite directed acyclic graph without multiple edges where each vertex is a position and each downward edge between vertices is a move. A source is a vertex with no incoming edges and it is equivalent to the original position of the game. A sink is a vertex with no outgoing edges, so a sink of the graph is a terminal position of the game.

Let $G$ be the graph of the game max-Welter. We define the \emph{height} of a position $p$, denoted by $h(p)$, the length of the longest directed path from $p$ to the sink. We prove by induction on the height $h(p)$ of position $p$ that ``if $\G(p) \geq 2$ then $\G^-(p) = \G(p)$" $(*)$. Note that if $h(p) = 1$ then $\G^-(p) = 0$ and $\G(p) = 1$. Assume that $(*)$ is true when $h(p) \leq n$. We show that $(*)$ is true when $h(p) = n+1$.

We can assume that $\G(p) = m$ for some $m \geq 2$. For each $k$ such that $k < m$, there exists $w_k$ such that $\G(w_k) = k$ and one can move from $p$ to $w_k$. Note that $h(w_k) \leq n$. By the swapping on Sprague-Grundy values 0 and 1 of $\G$ and $\G^-$, and by the inductive hypothesis, we have
\begin{align*}
\begin{cases}
\G^-(w_1) = \G(w_0) = 0 \text{ and } \G^-(w_0) = \G(w_1) = 1, \\
\G^-(w_k) = \G(w_k) = k \text{ for } 2 \leq k < m.
\end{cases}
\end{align*}
Therefore,
\[\{\G^-(w_k)| 0 \leq k < m\} = \{\G(w_k)| 0 \leq k < m\} = \{0,1, \ldots, m-1\}.\]
Note that if there exists one move from $p$ to some $w$ in $G^-$ then this move can also be made in $G$. Note that $\G(w) \neq m$. Also note that $h(w) \leq n$. By the swap on values 0 and 1 of $\G$ and $\G^-$, and by the inductive hypothesis, either $\G(w) + \G^-(w) = 1$ or $\G^-(w) = \G(w) \geq 2$ and so $\G^-(w) \neq m$. We have
\[\G^-(p) = \operatorname{mex}\{\G(w) | \text{ if there exists a move from $p$ to $w$}\}.\]
Since the $\operatorname{mex}$ set includes the set $\{0,1, \ldots, m-1\}$ and excludes $m$, ${\G^-(p) = m}$. This completes the proof.
\end{proof}

\section{Concluding remarks}
In this paper we studied the game Max-Welter.  We not only solved this game but also characterized $P$-positions and positions of the Sprague-Grundy value $1$. We exposed some properties related to symmetry and, more importantly, periodicity of this game. Finally, it is proven that the game Max-Welter is strongly miserable.

\section*{Acknowledgements}
I am indebted to the two referees for their helpful, detailed, constructive, and informative comments.


\end{document}